 \newtheorem{theorem}{Theorem}
\newtheorem{cor}[theorem]{Corollary}
 \newtheorem{proposition}[theorem]{Proposition}
\theoremstyle{definition}
\newtheorem{definition}[theorem]{Definition}
\theoremstyle{remark}
\begin{document}

\title[$L^r$-differentiability of Two Lusin Classes]{On the $L^r$-differentiability of Two Lusin Classes and a Full Descriptive Characterization of the $HK_r$-integral}

\subjclass[2020]{Primary 26A39, 26A46}

\author[MUSIAL]{PAUL MUSIAL}
\address{Chicago State University\\
9501 South King Drive, Chicago, Illinois,  60628 USA}
\email{paul.musial@gmail.com}

\author[SKVORTSOV]{VALENTIN SKVORTSOV}
\address{Lomonosov Moscow State University, Mathematics Department
and Moscow Center of Fundamental and Applied Mathematics\\
Moscow, 119991 Russia}
\email{vaskvor2000@yahoo.com}

\author[SWOROWSKI]{PIOTR SWOROWSKI}
\address{Casimirus the Great University \\ Powsta\'nc\'ow Wielkopolskich 2, 85-090 Bydgoszcz, Poland}
\email{p.sworowski@gmail.com}

\author[TULONE]{FRANCESCO TULONE}
\address{University of Palermo\\
via Archirafi 34, 90123 Palermo, Italy}
\email{francesco.tulone@unipa.it}

\maketitle
\thispagestyle{empty}








\begin{abstract}
It is proved that any function of a Lusin-type class, the class of $ACG_r$-functions, is differentiable almost everywhere in the sense of a derivative defined in the space~$L^r$, $1\le r<\infty$. This leads to obtaining a full descriptive characterization of a Henstock-Kurzweil-type integral, the $HK_r$-integral, which serves to recover functions from their $L^r$-derivatives. The class $ACG_r$ is compared with the classical Lusin class $ACG$ and it is shown that a continuous $ACG$-function can fail to be $L^r$-differentiable almost everywhere.
\end{abstract}
\section{Introduction}
The well-known classical definition of the Lebesgue integral in terms of absolute continuity of the indefinite integral is an example of a so-called full descriptive characterization of an integral. Definitions of this type are known also for a number of non-absolute generalizations  of the Lebesgue integral. The most well-known among them is Lusin's definition of Denjoy integral (see~\cite{saks}).

In general, we say that a {\em descriptive} characterization of a constructive integration process is obtained if a class of functions is found so that it coincides with the class of the indefinite integrals of all functions integrable in the sense of this process, and if functions of the class are differentiable almost everywhere in a certain sense, the derivative being equal to the integrand almost everywhere. If the differentiability (in the appropriate sense, corresponding to differentiability property of the indefinite integral) of functions of the class is an additional assumption put on the class, then we have only a {\em partial} descriptive characterization of the considered integral. If on the other hand the corresponding differentiability property is a consequence of the description of the class and not  an additional requirement, such a characterization is said to be a {\em full} descriptive characterization. For example, each continuous function from Lusin's class $ACG$ is differentiable almost everywhere only in the sense of the approximate derivative, and this class gives a full descriptive characterization of the wide Denjoy integral (the $D$-integral, see~\cite{saks}). At the same time if we restrict this class  by considering only continuous and differentiable (in the usual sense) $ACG$-functions then we obtain a partial descriptive characterization of the so-called Khintchine integral (see~\cite{khin}). Descriptive characterizations of other non-absolute integrals are discussed in \cite{Ene, Pfeffer, PT, STHaar15, minmax17, Thomson}.

In this paper we obtain a full descriptive characterization of a Henstock-Kurzweil-type integral, the $ HK_r$-integral (defined in 2004 by Musial and Sagher~\cite{MusialSagher2004}), which serves to integrate a derivative defined in the space~$L^r$, $1\le r <\infty$. This $L^{r}$-derivative was introduced in \cite{Calderon Zygmund} by Calder{\'o}n and Zygmund to be used in some estimates for solutions of elliptic partial differential equations. The $HK_r$-integral was defined as an extension of a Perron-type integral, the $P_{r}$-integral, which was defined earlier by L.\,Gordon~\cite{gordon} and which also recovers a function from its $L^{r}$-derivative.  The $HK_{r}$-integral turned out to be strictly wider than the $P_{r}$-integral (see~\cite{MST1}). It was also shown in \cite{MusialSagher2004} that the indefinite $HK_{r}$-integral is $L^{r}$-differentiable almost everywhere and belongs to a Lusin-type class of $ACG_r$-functions. Some other properties of these integrals are investigated in \cite{MST3,mt15,mtdual19,Variational}.

Another class, also giving a descriptive characterization of the $HK_r$-integral, was obtained in \cite{MST2} in terms of absolute continuity of so-called $L^r$-variational measure generated by a function belonging to $L^r$. Both classes, the class of $ACG_r$-functions and the class of functions generating absolutely continuous $L^r$-variational measure, were known to coincide with the class of the indefinite $HK_{r}$-integrals, but only under the additional assumption that the functions are  $L^r$-differentiable almost everywhere (see~\cite{MST2}).  The problem of $L^r$-differentiability almost everywhere of functions of those classes was left open in~\cite{MST2}. So in fact only partial descriptive characterizations of the $HK_r$-integral were obtained in \cite{MusialSagher2004} and~\cite{MST2}.

The main aim of the present paper is to prove that any $ACG_r$-function is $L^{r}$-differentiable almost everywhere and  thereby obtain a full descriptive characterization of the $HK_r$-integral (see Theorem \ref{descr} in Section~\ref{s3}). In Section~\ref{s4}, we  construct an example of a function from a Lusin-type class, $[ACG]$, which, even under an additional assumption of continuity, fails to have the property of being $L^{r}$-differentiable almost everywhere for any $r$.  Thereby we show that the class $[ACG]$ is not contained in the class $ACG_r$ for any $r$. As a byproduct of this example we get a new proof, based on the $L^{r}$-derivative, of a result, previously obtained in~\cite{SS}, stating that the $HK_r$-integral does not cover the wide Denjoy integral.
\section{Preliminaries}
We recall here the main definitions and facts related to the notion of the $HK_r$-integral (see \cite{gordon,MusialSagher2004}).

We work in a fixed closed interval $[a,b]$. An interval $I$ is a non-degenerate closed subinterval of $[a,b]$. We assume in all these definitions that the functions $F$ belong to the class $L^r=L^r[a,b]$, $1\le r<\infty$. We start with $L^r$-derivates and the $L^r$-derivative.
\begin{definition}\label{derivate 1}
The {\em upper-right $L^r$-derivate} of $F$ at~$x$, denoted by $D_r^+F(x)$, is defined as the infimum of all numbers $\alpha$ such that
\begin{equation}\label{upper-right}
\left(\frac1h\int_0^h[F(x+t)-F(x)-\alpha t]_+^r\mkern1.5mu dt\right)^{\!1/r}\!=\,o(h)\quad\text{as}\ h\rightarrow 0^+.
\end{equation}
If no real number $\alpha$ satisfies~\eqref{upper-right}, we set $D_r^+F(x)=+\infty$. The {\em lower-right}, {\em upper-left}, and {\em lower-left $L^r$-derivates} of $F$ at $x$ are defined correspondingly as
\begin{gather*}
D_{+,r}F(x)=-D_r^+(-F)(x),\quad D_r^-F(x)=D_r^+(-F)(-x),\\
\text{and}\quad D_{-,r}F(x)=-D_r^+F(-x).
\end{gather*}
 If all four $L^r$-derivates of $F$ at $x$ are equal, the common value, denoted by $F'_r(x)$, is the {\em$L^r$-derivative of $F$ at~$x$}. If $F'_r(x) $ is finite we say that $F$ is {\em$L^{r}$-differentiable at $x$} and the value of $F'_r(x)$, say $\alpha$, is uniquely defined by
\begin{equation}\label{derivative}
\left(\frac1h\int_{-h}^h|F(x+t)-F(x)-\alpha t|^{r}\mkern1.5mu dt\right)^{\!1/r}\!=\,o(h)\quad\text{as}\ h\to 0^+.
\end{equation}
\end{definition}

We need the following property of $L^r$-derivates (see \cite[Corollary of Theorem 4]{gordon})
\begin{proposition}\label{prop}
If the two  upper (or the two lower) $L^r$-derivates of a function $F$ are less than $+\infty$ (resp. greater than $-\infty$) on a set~$E$, then almost everywhere on $E$ the $L^{r}$-derivative of $F$ exists and is finite.
\end{proposition}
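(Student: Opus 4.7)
My plan follows the classical template for deducing a.e. differentiability from finiteness of one-sided derivates, adapted to the $L^{r}$ setting. Without loss of generality assume the two upper $L^{r}$-derivates are less than $+\infty$ on $E$; the lower case follows by considering $-F$. Since the integrand in~\eqref{upper-right} depends continuously on $h$ (for a.e.~$x$) and measurably on $x$, the upper-right derivate $D_r^{+}F$ can be expressed as an infimum over a countable family of measurable quantities, hence is Borel-measurable; the same applies to $D_r^{-}F$. Consequently the sets
\[ E_n = \{x \in E : D_r^{+}F(x) \leq n \ \text{and}\ D_r^{-}F(x) \leq n\} \]
are measurable with $E = \bigcup_{n} E_n$, so it is enough to prove a.e.\ $L^r$-differentiability on each $E_n$.

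Fix $n$. Choosing $\alpha = n+1>n$ in the defining condition~\eqref{upper-right} and performing a standard Lusin--Egoroff-type refinement, one further decomposes $E_n$ into countably many measurable subsets $E_{n,k}$ on which the $o(h)$ rates hidden in both upper-derivate conditions hold at a uniform rate in $x$. For the auxiliary function $G(x) = F(x) - (n+1)x$ one then has $D_r^{+}G(x) \leq 0$ and $D_r^{-}G(x) \leq 0$ uniformly on $E_{n,k}$. At this point I would invoke the $L^{r}$-analog of the Lebesgue differentiation theorem for ``monotone-type'' functions---namely Theorem~4 of~\cite{gordon}, of which the stated proposition is the corollary---which asserts that such a $G$ is $L^{r}$-differentiable a.e.\ on $E_{n,k}$. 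Adding the linear term $(n+1)x$ back yields the $L^{r}$-differentiability of $F$ a.e.\ on $E_{n,k}$, and thus a.e.\ on $E$.

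The principal obstacle is the last step, the $L^{r}$-analog of the classical implication ``both upper derivates non-positive $\Rightarrow$ non-increasing $\Rightarrow$ differentiable a.e.''. Classically this rests on the Denjoy--Young--Saks theorem together with Lebesgue's theorem on monotone functions; in the $L^{r}$-setting it demands a Calder\'on--Zygmund-style Vitali covering argument tailored to the integral condition~\eqref{derivative}, which provides the $L^r$ polynomial approximation that replaces pointwise monotonicity. Everything else---measurability of the derivates, the Lusin--Egoroff refinement, and the affine reduction $G = F - (n+1)x$---is essentially routine compared with this covering-theoretic core.
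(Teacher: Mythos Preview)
The paper does not supply its own proof of this proposition; it merely quotes it as the Corollary of Theorem~4 in~\cite{gordon}. Your proposal is likewise not a self-contained argument: at the decisive point you explicitly invoke Theorem~4 of~\cite{gordon} and acknowledge this as ``the principal obstacle''. So in effect you and the paper do the same thing---defer the substance to Gordon.

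What you add is the routine reduction from the Corollary to the Theorem: decompose $E$ into the sets $E_n=\{D_r^{+}F\le n,\ D_r^{-}F\le n\}$, pass to $G(x)=F(x)-(n+1)x$, and use that the $L^r$-derivates shift affinely, $D_r^{\pm}G=D_r^{\pm}F-(n+1)\le -1$. That reduction is correct. The intermediate ``Lusin--Egoroff refinement'' to obtain uniform $o(h)$ rates is, however, unnecessary: the affine shift already gives $D_r^{+}G(x)\le 0$ and $D_r^{-}G(x)\le 0$ pointwise on all of $E_n$, with no uniformity required, so the further decomposition into $E_{n,k}$ does no work. Apart from that superfluous step, your outline matches exactly how one passes from Gordon's Theorem~4 to its Corollary; the genuine Vitali-type covering argument you allude to at the end lives in Gordon's paper, not here.
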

For completeness we give the following definitions; see \cite{R Gordon} for a full treatment.
\begin{definition}
Let $F\colon[a,b]\to\mathbb R$ and let $c\in [a,b]$.  The function $F$ is said to be \textit{approximately continuous at} $c$ if there exists a measurable set $E$ containing $c$ such that $c$ is a point of density of $E$ and $F|_E$ is continuous at $c$.
\end{definition}
\begin{definition}
Let $F\colon[a,b]\to\mathbb R$ and let $c\in [a,b]$.  The function $F$ is said to be \textit{approximately differentiable at} $c$ if there exists a measurable set $E$ containing $c$ such that $c$ is a point of density of $E$ and
$$\lim_{x\rightarrow c, x\in E}\frac{F(x)-F(c)}{x-c}$$
 is finite. The value of this limit is the approximate derivative of $F$ at $c$, and we will denote it by $F'_{\textup{ap}}(c)$.  Note that the value of $F'_{\textup{ap}}(c)$ does not depend on the choice
of $E$.
\end{definition}
To define the $HK_r$-integral \cite{MusialSagher2004} we need some auxiliary notions. A  {\it tagged interval} is a pair $(I, x)$ where a point $x\in I$ is a {\it tag}. We say that tagged intervals  $(I',x')$ and $(I'',x'')$ are nonoverlapping if intervals $I'$ and $I''$ are nonoverlapping, i.e., they have no inner points in common. A {\em partition} is any finite collection $\pi$ of pairwise nonoverlapping tagged intervals. A {\it gauge} is a strictly positive function $\delta$ on $[a,b]$ (or on a subset of $[a,b]$). We say that a tagged interval $(I, x)$ is {\it $\delta$-fine} if $I\subset(x-\delta(x),x+\delta(x))$.

A partition is {\em$\delta$-fine} if all its elements are $\delta$-fine.
A partition $\pi$ is {\it tagged in a set} $E\subset [a,b]$ if $x \in E$ for each element  $(I, x)$ of $\pi$.

The Lebesgue measure on $[a,b]$ will be denoted by $\mu$. We recall the definition of the $L^{r}$-Henstock--Kurzweil integral given in \cite{MusialSagher2004}.
\begin{definition}\label{def1}
{\rm A function $f\colon[ a,b] \rightarrow \mathbb{R}$ is {\em $L^{r}$-Henstock--Kurzweil integrable} ({\em HK$_r$-integrable}) on $[ a,b] $ if there exists a function $F\in L^{r}$ such that for any $ \varepsilon >0$ there exists a gauge $\delta $ such that for any $\delta$-fine partition $\{([c_{i},d_{i}], x_i)\}_i$ we have
\begin{equation*}
\sum\limits_i\left( \dfrac{1}{d_{i}-c_{i}}\int_{c_{i}}^{d_{i}}\lvert F( y) -F( x_{i}) -f( x_{i}) (y-x_{i}) \rvert ^{r}\,dy\right)^{1/r}<\varepsilon.
\end{equation*}}
\end{definition}

\begin{definition}\label{def6} \cite{MusialSagher2004}
Let $E\subset[a,b]$ and $F\in L^r$. We say that $F\in AC_r(E)$ if for each $\varepsilon>0$ there exist $\eta>0$ and a gauge $\delta$ defined on $E$ such that for any  $\delta $-fine partition $\{([c_i,d_i], x_i)\}_i$ tagged in $E$ and such that
\begin{equation}\label{99}
\sum_i (d_i-c_i)<\eta
\end{equation}
we have
\begin{equation}\label{100}
\sum_i\left( \frac{1}{d_i-c_i}\int_{c_i}^{d_i}|F(y)-F(x_i)|^r\,dy\right)^{\!1/r}<\,\varepsilon.
\end{equation}
\label{def4}We say that $F$ is an $ACG_r$-\textit{function} if  $[a,b]=\bigcup_{n=1}^\infty E_n$ where $F\in AC_r(E_n)$ for all~$n$.
\end{definition}
\begin{definition}
Let $E\subset [a,b]$. We say that $F\in AC(E)$ if for each $\varepsilon>0$ there exist $\eta>0$ such that for any finite collection of nonoverlapping intervals  $\{[c_i,d_i]\}_i$ having endpoints in $E$ and such that
\begin{equation*}
\sum_i (d_i-c_i)<\eta
\end{equation*}
we have
\begin{equation*}
\sum_i|F(d_i)-F(c_i)|<\,\varepsilon.
\end{equation*}
We say that $F$ is an {\em ACG-function} if $[a,b]=\bigcup_{n=1}^\infty E_n$, where $F\in AC(E_n)$ for all~$n$. If, moreover, all $E_n$ can be chosen closed, we say $F$ is an {\em\textup[ACG\textup]-function}.
\end{definition}
\begin{definition}
A function $f\colon[a,b]\to\mathbb R$ is said to be {\em D-integrable} ({\em integrable in the wide Denjoy sense}) if there exists a {\em continuous} $ACG$-function $F\colon[a,b]\to\mathbb R$ such that $F'_{\textup{ap}}(x)=f(x)$ at almost all $x\in[a,b]$. We then define $\int_a^bf=F(b)-F(a)$.
\end{definition}
\begin{definition}
A function $f\colon[a,b]\to\mathbb R$ is said to be {\em Kubota integrable} if there exists an approximately continuous $[ACG]$-function $F\colon[a,b]\to\mathbb R$ such that \ $F'_{\textup{ap}}(x)=f(x)$ at almost all $x\in[a,b]$. We then define $\int_a^bf=F(b)-F(a)$.
\end{definition}
We will use the following property of $ACG_r$-functions.
\begin{proposition}\label{pr}
\cite[Corollary 1]{MusialSagher2004} If $F$ is an $ACG_r$-function, then  $[a, b]$ can be represented as $[a,b]=\bigcup_{n=1}^{\infty}E_n$ with  $F\in AC(E_n)$ for each~$n$.
\end{proposition}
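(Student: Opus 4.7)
The plan is to exploit Minkowski's inequality in $L^r([c,d])$: for any $c<d$,
\begin{equation*}
|F(d)-F(c)|\le\left(\frac{1}{d-c}\int_c^d|F(y)-F(c)|^r\,dy\right)^{\!1/r}+\left(\frac{1}{d-c}\int_c^d|F(y)-F(d)|^r\,dy\right)^{\!1/r}.
\end{equation*}
The two right-hand terms are precisely the summands appearing in Definition~\ref{def6} for the single-interval partitions $\{([c,d],c)\}$ and $\{([c,d],d)\}$. Consequently, whenever $c,d\in E$ and both of these singletons are $\delta$-fine, summing the inequality above over a nonoverlapping family $\{[c_i,d_i]\}_i$ and applying the $AC_r(E)$ estimate twice (once with tags~$c_i$, once with tags~$d_i$) turns the $L^r$-oscillation control of $AC_r$ into the classical oscillation control of $AC$, at the cost of a factor of~$2$.

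To implement this, given $F\in AC_r(E_n)$, I would fix for each $m\in\mathbb{N}$ a gauge $\delta_m$ and threshold $\eta_m$ corresponding to $\varepsilon=1/m$ in Definition~\ref{def6}, arranging $\delta_m$ to be pointwise decreasing in $m$ by replacing it with $\min_{\ell\le m}\delta_\ell$ (still valid, since shrinking a gauge only restricts the admissible partitions). Writing $\delta^\star(x)=\lim_m\delta_m(x)$, the countable collection
\begin{equation*}
G_{k,j}=\bigl\{x\in E_n\cap[a+(j-1)/k,\,a+j/k)\,:\,\delta^\star(x)\ge 1/k\bigr\},\qquad k,j\in\mathbb{N},
\end{equation*}
covers the set $\{\delta^\star>0\}$, and each $G_{k,j}$ has diameter below $1/k$ while $\delta_m(x)\ge\delta^\star(x)\ge 1/k$ on $G_{k,j}$ for every~$m$. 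Hence for any nonoverlapping family with endpoints in $G_{k,j}$ and total length below $\eta_m$, the two tagged variants are simultaneously $\delta_m$-fine; applying $AC_r(E_n)$ at level $1/m$ to each and summing the Minkowski estimate yields $\sum_i|F(d_i)-F(c_i)|<2/m$. Since $m$ is arbitrary, this is the $AC(G_{k,j})$ property.

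The hard part will be the residual set $R=\{x\in E_n:\delta^\star(x)=0\}$ on which the sequence of gauges degenerates. Since the $AC_r$ property is hereditary to subsets of $E_n$, one can repeat the construction on $R$ with a fresh sequence of gauges and iterate---transfinitely if necessary---until exhaustion, in analogy with the classical transfinite construction used to produce the $E_n$'s of an $ACG$-function. The delicate point is to verify that this peeling procedure terminates with only countably many pieces in total; once this is settled, assembling the resulting $AC$-pieces over all~$n$ yields the desired countable decomposition of $[a,b]$.
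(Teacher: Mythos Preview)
The paper does not supply its own proof of this proposition; it is simply quoted from \cite[Corollary~1]{MusialSagher2004}.  So there is no in-paper argument to compare your attempt against.

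Your Minkowski estimate and the decomposition into the sets $G_{k,j}$ are sound: on each $G_{k,j}$ the single lower bound $1/k$ serves for \emph{every} gauge $\delta_m$ simultaneously, so the two tagged partitions $\{([c_i,d_i],c_i)\}$ and $\{([c_i,d_i],d_i)\}$ are $\delta_m$-fine for all $m$, and the $AC(G_{k,j})$ conclusion follows exactly as you say.

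The genuine gap is the treatment of the residual set $R=\{x\in E_n:\delta^\star(x)=0\}$.  Nothing in the definition of $AC_r(E_n)$ prevents $\delta_m(x)\to0$ at \emph{every} point of $E_n$; in that case $R=E_n$ and your first pass peels off nothing at all.  Invoking ``fresh gauges'' on $R$ does not help: the $AC_r$ property merely asserts the existence of \emph{some} gauge for each $\varepsilon$, and the restrictions $\delta_m|_R$ are themselves admissible choices for $AC_r(R)$.  With that choice the new $\delta^\star$ again vanishes identically on $R$, so the transfinite peeling makes no progress whatsoever.  There is no Baire-category or measure-drop mechanism in your outline that would force the residual set to shrink, and the ``analogy with the classical transfinite construction'' does not supply one---in the classical setting the transfinite process bites because one works with \emph{closed} sets and the Cantor--Baire stationary principle, neither of which is available here.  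The delicate point you flag is not a detail to be filled in later; it is the whole difficulty, and the approach as written does not resolve it.  A correct argument must produce a countable decomposition without demanding a pointwise lower bound valid for the entire sequence of gauges.
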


Recall that $F\in L^r$ and so is measurable. Hence the above proposition implies that any $ACG_r$-function is a measurable $VBG$-function, and so by the Denjoy-Khintchine Theorem (see \cite[Chapter VII, Section 4, Theorem 4.3]{saks}) we obtain:
\begin{theorem}\label{appdiff}
Any $ACG_r$-function is approximately differentiable a.e.\ on $[a,b]$.
\end{theorem}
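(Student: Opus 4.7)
The plan is to assemble the conclusion directly from the two ingredients provided just before the theorem: Proposition~\ref{pr} (rewriting the defining decomposition for $ACG_r$ in terms of the classical $AC$ property on subsets) and the classical Denjoy--Khintchine theorem from Saks. Concretely, I would argue that an $ACG_r$-function is automatically a measurable $VBG$-function, and then let the Denjoy--Khintchine theorem do the rest.

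First I would fix an $ACG_r$-function $F$ and invoke Proposition~\ref{pr} to write $[a,b]=\bigcup_{n=1}^{\infty}E_n$ with $F\in AC(E_n)$ for every $n$. Next I would verify that each $AC(E_n)$ condition forces $F$ to be of bounded variation on $E_n$ in the classical sense, namely that the sums $\sum_i|F(d_i)-F(c_i)|$ over finite collections of nonoverlapping intervals with endpoints in $E_n$ are uniformly bounded. This is a standard consequence of the definition of $AC(E)$: choose $\eta$ corresponding to $\varepsilon=1$, split $[a,b]$ into finitely many subintervals each of length $<\eta$, and bound the total variation over endpoints in $E_n$ by the number of subintervals. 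By possibly further splitting each $E_n$ into countably many pieces if needed, I obtain that $F$ is $VBG$ on $[a,b]$.

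Since $F\in L^{r}[a,b]$ it is in particular (Lebesgue) measurable, hence $F$ is a measurable $VBG$-function on $[a,b]$. At this point the Denjoy--Khintchine theorem (\cite[Chapter VII, Section 4, Theorem 4.3]{saks}) applies verbatim and yields that $F$ is approximately differentiable at almost every point of $[a,b]$, which is exactly the conclusion.

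I do not anticipate a genuine obstacle here; the only point that requires a moment of care is the passage from the $AC(E_n)$ condition (which involves the $\delta$-fine, tagged form used to define $ACG_r$) to the purely metric statement that $F$ has bounded variation on $E_n$ in the classical sense required by the Denjoy--Khintchine theorem. Once this bookkeeping is in place, the theorem is immediate.
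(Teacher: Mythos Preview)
Your proposal is correct and mirrors the paper's one-line argument: $F\in L^r$ gives measurability, Proposition~\ref{pr} supplies a decomposition with $F\in AC(E_n)$, hence $F$ is a measurable $VBG$-function, and the Denjoy--Khintchine theorem finishes. One small correction to your closing remark: the $AC(E_n)$ delivered by Proposition~\ref{pr} is already the \emph{classical} endpoint condition (no gauge, no tags)---the passage from the $\delta$-fine $AC_r$ form to classical $AC$ is exactly the content of Proposition~\ref{pr} (from \cite{MusialSagher2004})---so the only bookkeeping left is the standard implication $AC(E)\Rightarrow VB(E)$, which you handle correctly.
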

\section{$L_r$-differentiability of $ACG_r$-functions}\label{s3}
We now state and prove the main theorem of this paper.
\begin{theorem}\label{diff of ACGr}
Any $ACG_r$-function $F$ is $L_r$-differentiable a.e.\ on $[a,b]$.
\end{theorem}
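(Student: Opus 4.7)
The approach is to combine the approximate differentiability granted by Theorem~\ref{appdiff} with the quantitative local $L^r$-control built into the $AC_r$-condition, and to conclude via Proposition~\ref{prop}. Proposition~\ref{prop} reduces the task to showing that both upper $L^r$-derivates of $F$ are finite almost everywhere; I will actually prove that the $L^r$-derivative $F'_r(x_0)$ exists and equals the approximate derivative $F'_{ap}(x_0)$ at almost every $x_0$.

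Fix $n$. By a standard subpartition argument (intersecting $E_n$ with the level sets $\{x:\delta(x)>1/k\}$ and passing to closures), I may replace $E_n$ by a closed subset on which $F\in AC_r$ with the gauge $\delta$ bounded below by a positive constant. Fix a density point $x_0\in E_n$ at which $\alpha=F'_{ap}(x_0)$ exists. Since $y\mapsto\alpha(y-x_0)+F(x_0)$ is Lipschitz, the function $F(y)-\alpha(y-x_0)-F(x_0)$ remains in $AC_r(E_n)$, so I may assume $F(x_0)=0$ and $\alpha=0$. The target inequality becomes: given $\varepsilon>0$, for all sufficiently small $h>0$,
\[
\int_{x_0}^{x_0+h}|F(y)|^r\,dy\ \le\ \varepsilon^r h^{r+1},
\]
together with the symmetric bound on $[x_0-h,x_0]$. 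Set $A=\{y\in E_n:|F(y)|\le\varepsilon(y-x_0)\}$ and $B_h=[x_0,x_0+h]\setminus A$; approximate differentiability and the Lebesgue density theorem give $|B_h|=o(h)$, and on the good set the pointwise estimate yields $\int_{A\cap[x_0,x_0+h]}|F|^r\,dy\le\varepsilon^r h^{r+1}/(r+1)$.

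The crux is to bound $\int_{B_h}|F|^r\,dy$ by $o(h^{r+1})$. Decompose $B_h$ into its trace on $[x_0,x_0+h]\setminus E_n$ and its trace on $E_n$. The first piece is handled by applying the $AC_r$-inequality to the tagged partition whose intervals are the contiguous components $(a_j,b_j)$ of $[x_0,x_0+h]\setminus E_n$ and whose tags are the endpoints $a_j\in E_n\cap A$. Setting $I_j=(\frac{1}{b_j-a_j}\int_{a_j}^{b_j}|F-F(a_j)|^r\,dy)^{1/r}$, the $AC_r$-inequality yields $\sum_j I_j<\varepsilon$, and the estimate $|F(a_j)|\le\varepsilon h$ (which holds since $a_j\in A$) reduces the problem to bounding $\sum_j(b_j-a_j)I_j^r$. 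The trace of $B_h$ on $E_n$ is handled analogously, using tags inside $E_n\cap B_h$ and the continuity of $F|_{E_n}$ guaranteed by Proposition~\ref{pr}.

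The principal obstacle is the bound on $\sum_j(b_j-a_j)I_j^r$: the naive estimate $\sum_j(b_j-a_j)I_j^r\le\max_j(b_j-a_j)\cdot\sum_j I_j^r\le\max_j(b_j-a_j)\cdot\varepsilon^r$ yields only $\varepsilon^r\cdot|B_h|=\varepsilon^r\cdot o(h)$, weaker than the required $o(h^{r+1})$ by a factor $h^r$. Closing this gap is the technical heart of the proof: I would pursue it by further subdividing $E_n$ into $F_\sigma$-subsets chosen so that at almost every density point the maximum complementary gap in $[x_0,x_0+h]$ decays faster than $h$, and by inserting many short sub-intervals inside $A$ into the partition, tagged by points of $E_n\cap A$ (which exist because $x_0$ is a density point of $E_n$), so that $\max_j(b_j-a_j)$ may be forced to the scale needed for the inequality above to deliver $o(h^{r+1})$. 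Once this refined estimate is in place, summation over $n$ yields $L^r$-differentiability almost everywhere on $[a,b]$.
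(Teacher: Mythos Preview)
Your opening strategy matches the paper's: invoke Theorem~\ref{appdiff} for approximate differentiability a.e., aim for $F'_r=F'_{\textup{ap}}$ a.e., and conclude via Proposition~\ref{prop}. But the gap you yourself flag is genuine and your proposed fix does not close it. The direct upper-bound route needs $\sum_j(b_j-a_j)I_j^r=o(h^{r+1})$, while the $AC_r$-condition only delivers $\sum_jI_j<\varepsilon$ for an $\varepsilon$ that is \emph{fixed once the gauge is fixed} and cannot be made to scale with~$h$. Further subdividing $E_n$ cannot force $\max_j(b_j-a_j)$ below the order of the total complementary measure $|B_h|$, and at a generic density point $|B_h|=o(h)$ is the best available rate, not $o(h^{r+1})$; inserting extra short sub-intervals from $A$ into the partition adds harmless terms but leaves the bad intervals $(a_j,b_j)$ untouched. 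There is also a smaller slip: the left endpoints $a_j$ of the contiguous components of $[x_0,x_0+h]\setminus E_n$ lie in $E_n$ (assuming it closed), but there is no reason they lie in your good set~$A$, so the bound $|F(a_j)|\le\varepsilon h$ is not justified as written.

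The paper overcomes the $h^r$-deficit by arguing by contradiction and by \emph{aggregating over many points} via a Vitali covering. Assuming $D_r^+F>F'_{\textup{ap}}$ on a set of positive measure, at each such point $x$ there is a sequence $h_k\to0^+$ with $\bigl(\tfrac1{h_k}\int_0^{h_k}[F(x+t)-F(x)-lt]_+^r\,dt\bigr)^{1/r}>c_xh_k$. Because this integrand is supported on the small set $B_x\cap[x,x+h_k]$ (the complement of a \emph{closed} set $A_x\subset W\cap S_x$, whose boundary points automatically lie in $A_x$ and thus serve as legitimate tags), a concentration step yields the lower bound $\sum_jI_j\ge n\,h_k$ for any preassigned~$n$, once $h_k$ is small enough. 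At a single point this is still only of order $h_k$, so no contradiction yet; the Vitali covering theorem is then used to select disjoint intervals $[x_j,x_j+h_{k_j}]$ covering the exceptional set up to measure zero, so that $\sum_jh_{k_j}$ dominates its measure and the grand total $\sum I$ exceeds the fixed $AC_r$-bound. The two ideas you are missing are exactly (i)~reversing the direction of the estimate (lower-bounding $\sum_jI_j$ via concentration on $B_x$, rather than trying to upper-bound $\int_{B_h}|F|^r$), and (ii)~the Vitali step, which is what converts the per-point bound $n\,h_k$ into a global violation of the $AC_r$-inequality.
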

\begin{proof}
According to Theorem \ref{appdiff}, $F'_{\textup{ap}}$ exists and is finite almost everywhere on $[a, b]$. We will show that
\begin{equation}\label{equal app}
F_r'(x)=F_{\textup{ap}}'(x)\ \ \text{a.e.}
\end{equation}
By Lusin's theorem on $C$-property it is enough to prove \eqref{equal app} on a closed set $E$ of positive measure such that $F$ restricted to $E$ is continuous. We can suppose that $F$ is approximately differentiable at each point of~$E$. Note that the approximate derivative and all $L_r$-derivates are measurable (see \cite{saks,gordon}). To simplify computation we consider first the case where $F'_{\textup{ap}}$ is non-negative on $E$. We start by proving that in this case
\begin{equation}\label{equal1}
D_{r}^{+}F(x)=F_{\textup{ap}}'(x) \ \ \text{a.e.\ on $E$}.
\end{equation}
If not, then having in mind the relation between $L^{r}$-derivates and approximate derivates (see \cite[Theorem~2]{gordon}) we get
\begin{equation}\label{inequal}
D_r^+F(x)>F_{\textup{ap}}'(x)\ge0
\end{equation}
on a set $E'\subset E$ of positive measure. We can suppose that $E'$ is closed. By Definition \ref{def4} we can find a set $E''\subset E'$ of positive outer measure where $F\in AC_r(E'')$. By Definition \ref{def6} for $\varepsilon=1$ there exist on $E''$ a gauge $\delta$ and a positive number $\eta$ such that inequality \eqref{100} holds for each $\delta $-fine partition $\{([c_i,d_i],x_i)\}_i$ tagged in~$E''$ and satisfying \eqref{99}. Now having fixed $\delta$ corresponding to the chosen $\varepsilon$ we can find a set $T \subset E''$ of positive outer measure on which $\delta(x)\geq \gamma$ for some constant $\gamma>0$. Let $\overline{T}$ be the closure of~$T$ and let $W$ be the set of two-sided limit points of $\overline{T}$.  We then have $\overline{T}=W\cup C$ where $C$ is countable. As the set $E'$ is closed we have $W\subset\overline{T}\subset E'\subset E$ and $\mu(W)>0$.

We show now that the inequality \eqref{100} with $\varepsilon=2$, i.e., the inequality
\begin{equation}\label{101}
\sum_i\left(\frac1{d_i-c_i}\int_{c_i}^{d_i}|F(y)-F(x_i)|^r\,dy\right)^{\!1/r}<\,2
\end{equation}
holds for any $\gamma$-fine partition $\{([c_i,d_i],x_i)\}_i$ tagged in $W$, and satisfying \eqref{99} with the same $\eta$ which was found for $\varepsilon=1$ and for $\delta$-fine partitions tagged in~$E''$. Indeed, consider such a $\gamma$-fine partition. By the definition of $W$ and by the continuity of $F$ with respect to~$E$,  we can choose, for each $i$, a point $x_i'\in T$ so close to $x_i$ {\it inside of} $[c_i,d_i]$ that  $([c_i,d_i],x_i')$ is also $\gamma$-fine and $\sum_i|F(x_i) - F(x'_i)|<1$. Then
\begin{align*}
&\sum_i\left( \frac{1}{d_i-c_i}\int_{c_i}^{d_i}|F(y)-F(x_i)|^r\mkern1.5mu dy\right)^{\!1/r}\\
&\qquad\le\sum_i\left( \frac{1}{d_i-c_i}\int_{c_i}^{d_i}|F(y)-F(x_i')|^r\mkern1.5mu dy\right)^{\!1/r}\\
&\qquad\qquad+\sum_i\left( \frac{1}{d_i-c_i}\int_{c_i}^{d_i}|F(x_i')-F(x_i)|^r\mkern1.5mu dy\right)^{\!1/r}\!<1+1=2.
\end{align*}
So \eqref{101} must be true for any $\gamma$-fine partition tagged in $W$ and satisfying~\eqref{99}. We now construct such a $\gamma$-fine partition for which \eqref{101} fails to be true. This will give us a contradiction proving~\eqref{equal1}.

Let $P\subset W$ be the set of density points of $W$. We have $\mu(P)=\mu(W)$. Let $x$ be any point of $P$. There exists a set $S_x$ having $x$ as a density point, such that
\begin{equation}\label{app}
\lim_{\substack{t\to 0\\x+t\in S_x}}\!\frac{F(x+t) - F(x)}{t}= F_{\textup{ap}}'(x)\ge0.
\end{equation}
Note that $x$ is also a density point for the set $W\cap S_x$. Moreover, we can find a closed set $A_x\subset W\cap S_x$
for which $x$ is still a density point. Let the open set $B_x$ be the complement of~$A_x$ in $[a,b]$. Choose the number $l$ so that $D^{+}_r(x)>l>F_{\textup{ap}}'(x)$. Then by Definition \ref{derivate 1} there exists a constant $c_x>0$ and a sequence $h_k\to 0^+$ such that for each $k$
\begin{equation}\label{not}
\left(\frac1{h_k}\int_{0}^{h_k}[F(x+t)-F(x)-lt]_+^r\mkern1.5mu dt\right)^{\!1/r}\!>\,c_x h_k.
\end{equation}
We now put some restrictions on the size of $h_k$. We cover $P$ by an open set $G\subset (a,b)$ such that $\mu(G)<2\mu(P)$, and we assume that all intervals $[x,x+h_k]$ are subsets of~$G$. As $A_x\cap[x,x+h]\subset\{x+t:F(x+t)-F(x)<lt\}$ for small enough~$h$, we can also assume that for each $h_k$
\begin{multline}\label{105}
\int_0^{h_k}[F(x+t)-F(x)-lt]_+^r\mkern1.5mu dt\\=\int_{B_x\cap[x, x+h_k]}[ F(u)-F(x)-l(u-x)]_+^r\mkern1.5mu du.
\end{multline}
Note that this equality in particular implies that in our further consideration we can assume that for all $k$,
\begin{equation}\label{>}
\mu(B_x\cap[x, x+h_k])>0.
\end{equation}
 Indeed, otherwise
$$\int_0^{h_k}[F(x+t)-F(x)-lt]_+^r\,dt= 0 $$
contradicting the claim that $D^{+}_rF(x)>l$.

Fix some natural $n>\max{\{2\mu(P)/\eta, 2/\mu(P)\}}$. As $x$ is a point of dispersion for the set $B_x$ we can find $h_x\le\gamma$ such that
\begin{equation}\label{106}
\frac{\mu(B_x\cap[x, x+h])}{h}< \min{\biggl\{\frac1n,\frac{c_x^r}{n^r}\biggr\}}
\end{equation}
holds for $0<h\le h_x$. So we can assume that $h_k<h_x\leq \gamma$ for all $k$. Having in mind \eqref{not},  \eqref{105}, \eqref{>}, and \eqref{106},  we obtain:
\begin{align}
&\left(\frac1{\mu(B_x\cap[x,x+h_k])}\int_{B_x\cap[x, x+h_k]}[F(u)-F( x) -l(u-x)]_+^r\mkern1.5mu du\right)^{\!1/r}\nonumber\\
&\quad=\left(\frac{h_k}{\mu(B_x\cap[x,x+h_k])}\right)^{\!1/r}\!\cdot\,\left(\frac1{h_k}\int_0^{h_k}[F(x+t)-F(x)-lt]_+^r\mkern1.5mu dt\right)^{\!1/r}\nonumber\\[-2ex]&\qquad\qquad\ge\biggl(\frac{n^r}{c^{r}_x}\biggr)^{\!1/r}\,\cdot c_xh_k=nh_k.\label{107}
\end{align}
As $B_x$ is an open set we have the representation
$$B_x\cap(x, x+h_k)= \bigcup_j\,(a_j, b_j).$$
Note that $a_j\in A_x\subset W$ and $b_j-a_j<h_x\le\gamma$ for each~$j$. So $([a_j,b_j],a_j)$ are $\gamma$-fine intervals tagged in~$W$.  Using the following obvious inequality for positive numbers
\[\left(\frac{\sum_ka_k}{\sum_kb_k}\right)^{\!1/r}\le\left(\sum_k\frac{a_k}{b_k}\right)^{\!1/r}\leq\sum_k\biggl(\frac{a_k}{b_k}\biggr)^{\!1/r}\]
(this is true for infinite sums provided the series are convergent), we get
\begin{align}\label{108}
\begin{split}
&\left(\frac1{\mu(B_x\cap[x,x+h_k])}\int_{B_x\cap[x,x+h_k]}[F(u)-F(x)-l(u-x)]_+^r\mkern1.5mu du\right)^{\!1/r}\\
&\qquad\le\left(\sum_j\frac1{b_j-a_j}\int_{a_j}^{b_j}[F(u)-F(x)-l(u-x)]_+^r\mkern1.5mu du\right)^{\!1/r}\\
&\qquad\le\left(\sum_j\frac1{b_j-a_j}\int_{a_j}^{b_j}|F(u) -F(a_j)|^r\mkern1.4mu du\right)^{\!1/r}\\
&\qquad\le\sum_j\left(\frac1{b_j-a_j}\int_{a_j}^{b_j}|F(u)-F(a_j)|^r\mkern1.5mu du\right)^{\!1/r}.
\end{split}
\end{align}
(We have taken into account that $F(a_j)- F(x)<l(a_j -x)<l(u-x)$ for $u \in (a_j, b_j)$). Combining \eqref{108} with \eqref{107} we get
\begin{equation}\label{110}
\sum_j\left(\frac 1{b_j-a_j}\int_{a_j}^{b_j}|F(u) -F(a_j)|^{r}\mkern1.5mu du\right)^{\!1/r}\geq nh_k.
\end{equation}
The family of intervals $\{[x, x+h_k]\}$ for all $x\in P$ and all $k$ forms a Vitali cover of~$P$. Applying the Vitali covering theorem we choose a countable sequence of nonoverlapping intervals $[x_j, x_j+h_{k_j}]$ which are subsets of $G$ and cover $P$ up to a set of measure zero. Hence $\mu(P)\le\,\sum_jh_{k_j}\le\mu(G)<2\mu(P)$. We can apply \eqref{110} to each of these intervals and to the corresponding set $B_{x_j}\cap[x_j, x_j+ h_{k_j}]$. Collecting all intervals constituting $B_{x_j}\cap[x_j, x_j+ h_{k_j}]$ for all $j$ we obtain a family $\{(I_i, z_i)\}_i$ of  nonoverlapping $\gamma$-fine intervals tagged in $P$, and so in $W,$  the total length of which is estimated, due to~\eqref{106}, by
$$\sum_j\mu(B_{x_j}\cap[x_j, x_j+ h_{k_j}]) < \sum_j\frac{ h_{k_j}}n \leq\frac{ 2\mu(P)}n<\eta.$$
So for any finite subfamily of
$\{(I_i,z_i)\}_i$ the inequality \eqref{101} should hold. At the same time by \eqref{110} and by the choice of $n$ we have
$$\sum_{i} \left(\frac 1{|I_{i}|}\int_{I_{i}}|F(u)-F(z_{i})|^r\,du\right)^{\!1/r}>n\mu(P)>2,$$
which must hold also
for a {\em certain} finite subfamily of $\{(I_i,z_i)\}_i$ (i.e., a
$\gamma$-fine partition tagged in $W$). This contradiction  proves that our assumption \eqref{inequal} is false and \eqref{equal1} is true for the case of non-negative $F'_{\textup{ap}}$.

In the same way using the definition of the upper-left $L^r$-derivate, we obtain a contradiction to the assumption that
\begin{equation}\label{inequa2}
D_{r}^{-}F(x)>F_{\textup{ap}}'(x)\ge0
\end{equation}
holds on a set of positive measure. In this case, instead of equality (\ref{105}), the equality
\begin{multline}\label{205}
\int_{0}^{h_k}[F(x-t)-F(x)+lt] _{-}^{r}\mkern1.5mu dt\\
=\int_{B_x\cup[x- h_k, x]}[F(u)-F(x)-l(u-x)] _{-}^{r}\mkern1.5mu du
\end{multline}
is used for respective $h_k$. This time the intervals $\{(a_i,b_i)\}_i$ constituting $B_x\cap[x-h_k,x]$ give rise to a  $\gamma$-fine partition $\{(I_i,z_i)\}_i$ tagged in~$W$. Eventually we obtain that:
\begin{equation}\label{equal2}
D_{r}^{-}F(x)=F_{\textup{ap}}'(x) \quad\text{a.e.\ on $E$}.
\end{equation}
(again in the case of non-negative $F'_{\textup{ap}}$). Having \eqref{equal1} and \eqref{equal2} we apply Proposition \ref{prop} and prove the statement of the theorem for the case of non-negative~$F'_{\textup{ap}}$.

If $F'_{\textup{ap}}$ is negative on a set of positive measure on which \eqref{equal app} is false we apply the argument above to the function $-F$.
\end{proof}
As we have already noted, the above theorem shows that the class of $ACG_r$-functions coincides with the class of indefinite integrals of all $HK_r$-integrable functions, and we obtain a full descriptive characterization of the $HK_r$-integral:
\begin{theorem}\label{descr}
A function $f$ is HK$_r$-integrable on $[a,b]$ if and only if there exists an $ACG_r$-function $F$ such that $F_r'=f$ a.e.; the function $F(x)-F(a)$ being the indefinite HK$_r$-integral of~$f$.
\end{theorem}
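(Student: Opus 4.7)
The plan is to assemble Theorem~\ref{descr} from three ingredients already available: two previously known (partial) characterizations of the $HK_r$-integral, and the main Theorem~\ref{diff of ACGr} just established. The point is that the new theorem supplies exactly what was previously assumed as an extra hypothesis.

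For the necessity direction, I would cite the result of Musial and Sagher~\cite{MusialSagher2004}: if $f$ is $HK_r$-integrable on $[a,b]$ with primitive $F$ (i.e., the function $F\in L^r$ appearing in Definition~\ref{def1}), then $F$ is an $ACG_r$-function and $F'_r(x)=f(x)$ for almost every $x\in[a,b]$. Setting the primitive normalized so that $F(a)=0$, the function $x\mapsto F(x)-F(a)$ is then the indefinite $HK_r$-integral of~$f$. This direction requires no new ingredient.

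For the sufficiency direction, suppose $F$ is an $ACG_r$-function with $F'_r(x)=f(x)$ almost everywhere. The partial descriptive characterization established in~\cite{MST2} asserts that any $ACG_r$-function that is $L^r$-differentiable almost everywhere coincides (up to an additive constant) with the indefinite $HK_r$-integral of its $L^r$-derivative. Therefore $f$ is $HK_r$-integrable on $[a,b]$ and $F(x)-F(a)$ is its indefinite integral. The only missing hypothesis in~\cite{MST2}, namely almost everywhere $L^r$-differentiability of~$F$, is now furnished for free by Theorem~\ref{diff of ACGr}, so no additional assumption has to be imposed on~$F$.

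There is essentially no obstacle at this stage: all the hard work has already been done in the proof of Theorem~\ref{diff of ACGr}. The only care I would take is to spell out precisely that the $ACG_r$-primitive produced by the $HK_r$-integrability in Definition~\ref{def1} coincides a.e.\ with the $F$ appearing in the statement (so that $F(x)-F(a)$ really is the indefinite integral), and that the identification $F'_r=f$ a.e.\ is consistent in both directions. With these bookkeeping remarks, the proof reduces to a one-line combination of~\cite{MusialSagher2004}, \cite{MST2}, and Theorem~\ref{diff of ACGr}.
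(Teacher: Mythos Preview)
Your proposal is correct and matches the paper's own approach: the paper does not give a formal proof of Theorem~\ref{descr} at all, but simply presents it as an immediate consequence of Theorem~\ref{diff of ACGr} combined with the known results from~\cite{MusialSagher2004} (necessity) and~\cite{MST2} (sufficiency under the extra differentiability hypothesis, now removed). Your outline is precisely this combination, with the bookkeeping made explicit.
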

\section{The $ACG$ property and $L_r$-differentiability}\label{s4}
The following statement is proved in  \cite[Corollary~13]{SS}:
\begin{proposition}\label{prop111}
Each indefinite HK$_r$-integral is an \textup[ACG\textup]-function.
\end{proposition}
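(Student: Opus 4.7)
I would prove Proposition~\ref{prop111} by combining the $ACG_r$-decomposition of $F$ with Lusin's theorem, and then reducing the residual null set by exploiting the stronger $AC_r$-structure available for an indefinite $HK_r$-integral.

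By Theorem~\ref{descr} the indefinite $HK_r$-integral $F$ is an $ACG_r$-function, and Proposition~\ref{pr} already provides a decomposition $[a,b]=\bigcup_n G_n$ with $F\in AC(G_n)$ classically, so $F$ is an $ACG$-function and only the closedness of the pieces needs to be achieved. The standard inheritance lemma---if $F\in AC(E)$ and $F$ is continuous on $\overline E$, then $F\in AC(\overline E)$---reduces this to arranging continuity of $F$ on the closures. Since $F\in L^r[a,b]$ is measurable, I would apply Lusin's theorem inside each $G_n$ to produce, for every $k\in\mathbb N$, a closed set $K_{n,k}\subset G_n$ with $F|_{K_{n,k}}$ continuous and $\mu(G_n\setminus K_{n,k})<1/k$. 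Restriction gives $F\in AC(K_{n,k})$, so the family $\{K_{n,k}\}_{n,k}$ forms a closed cover of $[a,b]$ up to a residual Lebesgue-null set $Z$.

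The genuine difficulty is to cover $Z$ by closed sets on which $F$ is classically $AC$: continuity of $F$ on a compact null set does not by itself imply $AC$ of that set, as slowly oscillating constructions on a set of the form $\{0\}\cup\{1/n:n\ge1\}$ show. To overcome this I would invoke the gauge $\delta_n$ present in the $AC_r$-definition (Definition~\ref{def6}) that underlies $F\in AC_r(G_n)$. Stratifying $Z\cap G_n$ by the level sets $Z_{n,j}=\{x\in Z\cap G_n:\delta_n(x)\ge 1/j\}$ makes the gauge effectively uniform on each $Z_{n,j}$, and a further Lusin refinement provides closed subsets of the $Z_{n,j}$'s on which $F$ is continuous. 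The $L^r$-oscillation estimate \eqref{100}, applied to partitions with tag at the left endpoint and combined with continuity at the right endpoint, then dominates the classical pointwise-difference sum and yields $F\in AC$ on each of these closed subsets. Concatenating them with the $K_{n,k}$'s produces the required closed cover of $[a,b]$, and so $F\in[ACG]$. Alternatively, the route taken in \cite[Corollary~13]{SS} goes through the absolute continuity of the $L^r$-variational measure from \cite{MST2}; the essential obstacle---converting Lebesgue-null-set information into the classical $AC$-property---is the same in either approach, and is the step that demands genuinely more than just the $ACG$-property of $F$.
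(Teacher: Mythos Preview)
The paper does not give its own proof of Proposition~\ref{prop111}; the statement is quoted verbatim from \cite[Corollary~13]{SS}, and the argument there proceeds through the absolute continuity of the $L^r$-variational measure established in~\cite{MST2}. Your final paragraph correctly identifies that route, so as a citation your proposal is fine.

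Your proposed \emph{direct} argument, however, has a genuine gap at exactly the point you single out as the essential obstacle. Two problems arise in the treatment of the residual null set~$Z$.

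First, in Definition~\ref{def6} the gauge $\delta$ depends on~$\varepsilon$; there is no single ``$\delta_n$'' attached to $F\in AC_r(E_n)$. Stratifying by the level sets of the gauge for one fixed $\varepsilon$ makes the gauge uniform on each $Z_{n,j}$, but then~\eqref{100} is available only for that one $\varepsilon$, whereas classical $AC$ requires the estimate for every $\varepsilon>0$. A countable diagonalisation over $\varepsilon=1/m$ does not obviously produce a single countable closed cover on each piece of which all the gauges are simultaneously uniform.

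Second, and more seriously, the sentence ``the $L^r$-oscillation estimate~\eqref{100}, applied to partitions with tag at the left endpoint and combined with continuity at the right endpoint, then dominates the classical pointwise-difference sum'' does not go through. The quantity
\[
\left(\frac1{d_i-c_i}\int_{c_i}^{d_i}\lvert F(y)-F(c_i)\rvert^{r}\,dy\right)^{\!1/r}
\]
is an average of $\lvert F(y)-F(c_i)\rvert$ over the \emph{entire} interval $[c_i,d_i]$, while the continuity you have at $d_i$ is only continuity of $F$ \emph{restricted to the closed Lusin set}. That says nothing about the values $F(y)$ for $y\in[c_i,d_i]$ lying outside that set, so the average can be small while $\lvert F(d_i)-F(c_i)\rvert$ is large. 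What would be needed is $L^r$-continuity of $F$ at $d_i$ in the sense that $\bigl(\tfrac1h\int_{d_i-h}^{d_i}\lvert F-F(d_i)\rvert^r\bigr)^{1/r}\to0$; but points of the null set $Z$ need not be $L^r$-Lebesgue points of~$F$, so you cannot assume this. Thus the step that is supposed to convert~\eqref{100} into a bound on $\sum_i\lvert F(d_i)-F(c_i)\rvert$ is unjustified, and the sketch does not actually overcome the obstacle it names.
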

Using this proposition and equality (\ref{equal app}) for the indefinite $HK_r$-integral together with its approximate continuity (see \cite{MusialSagher2004}), we get:
\begin{theorem}\label{th2}
Every HK$_r$-integrable function on $[a,b]$  is Kubota integrable on $[a, b]$ and the values of these integrals coincide.
\end{theorem}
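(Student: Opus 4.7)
The plan is to assemble the conclusion directly from the results already listed in the excerpt; there is essentially no new analytic content to produce, only a matter of verifying that the indefinite $HK_r$-integral meets every requirement in the definition of Kubota integrability. I would start by fixing an $HK_r$-integrable function $f$ on $[a,b]$ and letting $F$ be its indefinite $HK_r$-integral, so that by convention $F(a)=0$ and the $HK_r$-integral of $f$ over $[a,b]$ equals $F(b)-F(a)$.

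Next I would check, one by one, the three properties that the Kubota definition demands of the primitive. First, Proposition \ref{prop111} gives immediately that $F$ is an $[ACG]$-function. Second, approximate continuity of $F$ is cited as a known property of indefinite $HK_r$-integrals from \cite{MusialSagher2004}, so I would simply invoke it. Third, I need $F'_{\textup{ap}}=f$ a.e.\ on $[a,b]$: the full descriptive characterization in Theorem \ref{descr} yields $F'_r=f$ a.e., while the equality $F'_r=F'_{\textup{ap}}$ a.e., established as \eqref{equal app} inside the proof of Theorem \ref{diff of ACGr}, upgrades this to $F'_{\textup{ap}}=f$ a.e.

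With these three ingredients in hand, $F$ satisfies the hypotheses in the definition of the Kubota integral, so $f$ is Kubota integrable and the Kubota integral of $f$ equals $F(b)-F(a)$, which by construction is the $HK_r$-integral of $f$. The only point that requires any care is making sure that equality \eqref{equal app} is formulated generally enough to apply to this $F$, which it is, since Theorem \ref{diff of ACGr} applies to any $ACG_r$-function, and every indefinite $HK_r$-integral is $ACG_r$ by \cite{MusialSagher2004}. There is no real obstacle; the result is a clean corollary of the preceding theorem together with the two cited facts.
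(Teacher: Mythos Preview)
Your proposal is correct and matches the paper's own argument essentially line for line: invoke Proposition~\ref{prop111} for the $[ACG]$ property, cite approximate continuity from \cite{MusialSagher2004}, and combine $F'_r=f$ a.e.\ with equality~\eqref{equal app} to obtain $F'_{\textup{ap}}=f$ a.e. The only cosmetic difference is that you route the $L^r$-differentiability of $F$ through Theorem~\ref{descr}, whereas the paper simply refers to \eqref{equal app} ``for the indefinite $HK_r$-integral''; both amount to the same thing once one recalls that the indefinite $HK_r$-integral is $ACG_r$.
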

Once again using Proposition \ref{prop111} and Theorem \ref{descr} we obtain a slight improvement of Proposition \ref{pr}.
\begin{cor}\label{pr1}
 Each $ACG_r$-function is an \textup[ACG\textup]-function.
\end{cor}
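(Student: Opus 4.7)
The plan is to chain together the three results that have just been established, since this corollary is essentially a composition of them. Given an arbitrary $ACG_r$-function $F$, I would first invoke Theorem \ref{diff of ACGr} to conclude that the $L^r$-derivative $F'_r(x)$ exists and is finite almost everywhere on $[a,b]$. Define $f\colon[a,b]\to\mathbb{R}$ by setting $f(x)=F'_r(x)$ wherever this derivative exists and $f(x)=0$ elsewhere; so $F'_r=f$ a.e.

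Next, I would apply Theorem \ref{descr}: since $F$ is an $ACG_r$-function and $F'_r=f$ a.e., the function $f$ is $HK_r$-integrable on $[a,b]$ and the map $x\mapsto F(x)-F(a)$ is the indefinite $HK_r$-integral of $f$. Now Proposition \ref{prop111} applies and gives that $x\mapsto F(x)-F(a)$ is an $[ACG]$-function. Since the class of $[ACG]$-functions is evidently invariant under addition of constants (the defining condition only involves differences $|F(d_i)-F(c_i)|$), it follows that $F$ itself is an $[ACG]$-function, as required.

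There is no genuine obstacle here: all the substantive work has already been done in Theorem \ref{diff of ACGr} (which removes the auxiliary $L^r$-differentiability hypothesis that previously separated the Lusin-type class $ACG_r$ from the class of indefinite $HK_r$-integrals) and in Proposition \ref{prop111} imported from \cite{SS}. The corollary is therefore the clean formal consequence one obtains once the full descriptive characterization of Theorem \ref{descr} is available, and its short proof is best presented simply as that chain of implications.
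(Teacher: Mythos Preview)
Your proposal is correct and follows exactly the route the paper indicates: the paper simply says the corollary is obtained ``using Proposition~\ref{prop111} and Theorem~\ref{descr}'', and your argument spells out precisely that chain (with the explicit appeal to Theorem~\ref{diff of ACGr} needed to produce the a.e.\ $L^r$-derivative before Theorem~\ref{descr} can be applied). The only addition is your remark that the $[ACG]$ class is closed under adding constants, which is immediate and implicit in the paper's one-line justification.
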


We now show that an $ACG$-function, even a continuous one, may fail to be $L_r$-differentiable a.e. This, in particular, will imply that the inclusion stated in Corollary \ref{pr1} is proper.
\begin{theorem}\label{th}
There exists a continuous $ACG$-function which is not $L_r$-differentiable on a set of positive measure for any~$r$.
\end{theorem}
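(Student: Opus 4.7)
The plan is to construct $F$ as a Cantor-style function. I would let $C \subset [0,1]$ be a fat Cantor set built by the middle-removal process where at step $n \geq 2$ each remaining closed interval donates a concentric open subinterval of relative length $\eta_n = 1/n^2$; by the telescoping identity $\prod_{k=n+1}^\infty(1-1/k^2) = n/(n+1)$ the set $C$ has Lebesgue measure $1/2$. Let $\{I_k\}$ be the sequence of complementary open intervals. Define $F \equiv 0$ on $C$ and on each $I_k$ let $F$ be the symmetric tent of height $1$ (rising linearly from $0$ at the left endpoint to $1$ at the midpoint, descending linearly to $0$ at the right endpoint). Such $F$ is continuous, bounded (hence $F \in L^r$ for every $r$), and $[ACG]$: $F \in AC(C)$ trivially since $F|_C \equiv 0$, and on each closed piece $\overline{I_k}$ the function $F$ is Lipschitz hence $AC$, so the decomposition $[0,1] = C \cup \bigcup_k \overline{I_k}$ witnesses the $[ACG]$ property.

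For the core calculation, denote by $J_n(x)$ the step-$n$ interval of length $\ell_n$ containing $x \in C$. The telescoping product above yields the key slow-decay estimate $\mu(J_n(x) \setminus C) = \ell_n/(n+1)$. Pick any density point $x$ of $C$ (they form a set of full measure in $C$). Taking the measurable witness set to be $C$ itself gives $F'_{\textup{ap}}(x) = 0$, so by the relation between $L^r$-derivates and approximate derivates cited in the proof of Theorem~\ref{diff of ACGr} the only possible value of $F'_r(x)$ is $0$. A direct integration of the tent gives $\int_{I_k} F^r \, dy = |I_k|/(r+1)$, and taking $h_n = \ell_n$ (so that $J_n(x) \subset (x-h_n, x+h_n)$) yields
\[
\frac{1}{h_n}\int_{-h_n}^{h_n}|F(x+t)|^r\,dt \;\geq\; \frac{\mu(J_n(x) \setminus C)}{(r+1)h_n} \;=\; \frac{1}{(r+1)(n+1)}.
\]
Taking $r$-th roots, the left-hand side decays only like $(\log(1/h_n))^{-1/r}$ while $h_n$ decays exponentially in $n$, so the quantity fails to be $o(h_n)$ and \eqref{derivative} fails with $\alpha = 0$. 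Hence $F$ is not $L^r$-differentiable at $x$, for every $r$; the set of non-differentiability points for each $r$ contains the density points of $C$, a set of measure $1/2$.

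The principal obstacle is a tension in the design of $C$: almost every point of $C$ must be a density point (to fix the candidate $L^r$-derivative via $F'_{\textup{ap}} = 0$), yet the rate at which the density of $C$ approaches $1$ must be slower than any polynomial in the scale (so that the $L^r$-obstruction survives for every $r$ at the same point). The choice $\eta_n = 1/n^2$ is dictated by exactly this trade-off: $\sum \eta_n$ converges, yielding $\mu(C) > 0$, whereas the tail sums $\sum_{k>n}\eta_k \sim 1/n$ force $\mu((x-h,x+h)\setminus C)/h \sim 1/\log(1/h)$ along the scales $h = \ell_n$, a logarithmic decay which defeats every polynomial comparison $h^r$ and so delivers the $L^r$-failure uniformly in $r$.
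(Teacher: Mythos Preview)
Your construction has a genuine gap: the function $F$ is not continuous. Every neighbourhood of a point $x\in C$ contains entire complementary intervals $I_k$ of $C$ (since $C$ is nowhere dense), and at the midpoint of each such $I_k$ your tent attains the value~$1$ while $F(x)=0$; hence $\limsup_{y\to x}F(y)=1\ne 0=F(x)$, and $F$ is discontinuous at \emph{every} point of~$C$. The theorem, however, asks for a \emph{continuous} $ACG$-function, so the example does not qualify. (Your $[ACG]$ verification and the $L^r$-lower bound are otherwise sound; the defect is solely in the assertion ``such $F$ is continuous''.)

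Repairing this forces the tent heights to decay to~$0$ with the generation of the removed interval, and then your clean identity $\int_{I_k}F^r=|I_k|/(r+1)$ is lost: the lower bound on $\tfrac{1}{h_n}\int_{-h_n}^{h_n}|F|^r$ becomes a sum over the gaps inside $J_n(x)$ weighted by the $r$-th powers of their now varying heights, and one must check that this weighted sum still dominates $h_n^{\,r}$. The paper resolves this tension by taking heights $(-1)^n/n$ at stage~$n$ and, instead of summing over all gaps, estimating the contribution of a \emph{single} gap $u_{n,k}$ immediately to the right of~$x$ whose length is an explicit fraction of the step-$n$ scale; the parameters $r_n$, $u_n$, $v_n$ are tuned so that the resulting quotient behaves like $2^{n}/n^{1+2/r}$, which is unbounded in~$n$ for every~$r$.
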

\begin{proof}
We let $r_{0,1}=[0,1]$. We remove from $r_{0,1}$ the open interval $u_{0,1}$ centered at $1/2$ having length $u_0=1/6$, leaving two closed intervals $r_{1,1}$ and $r_{1,2}$ each having length $r_1=5/12$.  We continue this process in such a way that at level $n$ we are left with $2^n$ closed intervals $\{r_{n,k}\}_{1\leq k\leq 2^n}$ each having length $r_n:=(n+4)/(2^{n+1}(n+2))$.  In order to accomplish this, from each interval $r_{n,k}$ the concentric open interval $u_{n,k}$ of length $u_n:=1/(2^{n}(n+2)(n+3))$ is removed. We have
$$\left|\bigcup_{k=1}^{2^n} r_{n,k}\right|=\frac12+\frac{1}{n+2}$$
Let
$$P=\bigcap_{n=1}^{\infty}\bigcup_{k=1}^n r_{n,k}$$
$P$ is a symmetric, perfect, Cantor-like subset of $[0,1]$ having measure $1/2$. Let $v_{n,k}$ be the open interval concentric to $u_{n,k}$ having length $v_n:=u_n/2$. Define a function $F$ taking the value $0$ on $P$ and $(-1)^n/n$ on $v_{n,k}$.  Extend $F$ linearly on the subintervals of $u_{n,k}$ adjacent to~$v_{n,k}$.

$F$ is obviously continuous. It is $AC(P)$ and $AC(u_{n,k})$ for each $n$ and $k$, and so it is $ACG$ on the whole interval $[0,1]$.

We will now show that $F$ fails to be  $L_r$-differentiable at any point of $P$ which is not right-hand isolated in $P$.  Let $x$ be such a point. There are arbitrarily large $n\in\mathbb N$ such that for some $h>0$ and $k$, $x+h$ is the midpoint of $v_{n,k}$. Let $l>0$ be such that $x+l$ is the left endpoint of such $v_{n,k}$ and let $\alpha\in\mathbb{R}$.   One can estimate
\begin{multline*}
\frac1h\left(\frac1h\int_0^h|F(x+t)-F(x)-\alpha t|^r\,dt\right)^{\!1/r}\\
>\frac1h\left(\frac1h\int_l^h|F(x+t)-\alpha t|^r\,dt\right)^{\!1/r}\ge\frac1h\left(\frac1h\int_l^h|F(x+t)|^r\,dt\right)^{\!1/r}
\end{multline*}
when $\alpha\ge0$ and $n$ is odd or $\alpha\le0$ and $n$ is even. Continue the estimate from below:
\begin{align*}
&\ge\frac2{r_n}\left(\frac2{r_n}\cdot\frac{v_n}{2n^r}\right)^{\!1/r}=\frac{2^{1-1/r}}{nr_n}\left(\frac{u_n}{r_n}\right)^{\!1/r}\\
&=2^{n+2}\cdot\frac{n+2}{n(n+4)^{1+1/r}(n+3)^{1/r}},
\end{align*}
which is unbounded in $n$.  This proves that the function $F$ fails to be $L_r$-differentiable on a set of measure $1/2$.
\end{proof}
As we have already mentioned the constructed example gives another proof of the following result obtained in \cite[Theorem 14]{SS}:
\begin{cor}\label{cor}
There exists a function which is D-integrable but which is HK$_r$-integrable  for no~$r$.
\end{cor}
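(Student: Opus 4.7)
The plan is to take the continuous $ACG$-function $F$ produced in Theorem~\ref{th} and set $f:=F'_{\textup{ap}}$, which exists almost everywhere by the Denjoy--Khintchine theorem applied to the continuous $ACG$-function~$F$. Then $f$ is $D$-integrable with indefinite $D$-integral equal to $F-F(0)$, confirming the first half of the corollary. I will then argue by contradiction: assume $f$ is $HK_r$-integrable for some $r\in[1,\infty)$ and derive a clash with Theorem~\ref{th}.

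First I would upgrade what was actually shown in Theorem~\ref{th}. The construction decomposes $[0,1]$ as $P\cup\bigcup_{n,k}\overline{u_{n,k}}$, a countable union of \emph{closed} sets on each of which $F$ is absolutely continuous (note that $F$ is continuous and linear on each $\overline{u_{n,k}}\setminus v_{n,k}$), so $F$ is in fact a continuous, hence approximately continuous, $[ACG]$-function. Therefore $F$ is a Kubota primitive of $f$ in the sense of the definition preceding Theorem~\ref{th2}. Next, assuming $f$ is $HK_r$-integrable, let $G$ denote its indefinite $HK_r$-integral. By Theorem~\ref{descr}, $G$ is an $ACG_r$-function with $G'_r=f$ a.e.; by Corollary~\ref{pr1} it is $[ACG]$; by the approximate continuity of $HK_r$-primitives cited in Theorem~\ref{th2}, it is approximately continuous; and since Theorem~\ref{appdiff} furnishes $G'_{\textup{ap}}$ a.e.\ and the identification $G'_r=G'_{\textup{ap}}$ established inside the proof of Theorem~\ref{diff of ACGr} forces $G'_{\textup{ap}}=f$ a.e., $G$ too is an approximately continuous $[ACG]$-primitive of~$f$, i.e.\ another Kubota primitive of~$f$.

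Two Kubota primitives of the same function differ only by an additive constant, so $G-F$ is constant on $[0,1]$. But then Theorem~\ref{diff of ACGr} applied to the $ACG_r$-function $G$ transfers $L_r$-differentiability a.e.\ to~$F$, contradicting Theorem~\ref{th}, which produced a set of measure $1/2$ on which $F$ is not $L_r$-differentiable. This contradiction rules out $HK_r$-integrability of $f$ for every $r\ge1$, finishing the corollary.

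The main obstacle is the uniqueness claim for Kubota primitives: it reduces to the classical fact that an approximately continuous $[ACG]$-function with approximate derivative zero almost everywhere is constant, which is exactly what legitimises the Kubota integral and will need to be invoked (or briefly justified) at that step. Everything else is bookkeeping and direct appeals to Theorems~\ref{diff of ACGr}, \ref{descr}, \ref{th2}, and~\ref{th}, plus Corollary~\ref{pr1}.
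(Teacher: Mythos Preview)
Your proposal is correct and follows essentially the same route as the paper: take $f=F'_{\textup{ap}}$ for the $F$ of Theorem~\ref{th}, assume $HK_r$-integrability, let $G$ be the indefinite $HK_r$-integral, observe that both $F$ and $G$ are approximately continuous $[ACG]$-primitives of~$f$, invoke the monotonicity/uniqueness property of that class to conclude $F-G$ is constant, and then transfer the a.e.\ $L^r$-differentiability of $G$ to $F$ for the contradiction. Your explicit verification that $F$ is actually $[ACG]$ (via the closed sets $P$ and $\overline{u_{n,k}}$) and your identification of the Kubota-uniqueness step as the one external ingredient are both on point; the paper glosses over the former and phrases the latter as the ``monotonicity property of the class of approximately continuous $[ACG]$-functions''.
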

Indeed, for $F$ from the proof of Theorem \ref{th} suppose $f=F'_\textup{ap}$ is $HK_r$-integrable. Then the indefinite $HK_r$-integral $G=\int f$ is an $[ACG]$-function (Theorem \ref{th2}) with $G'_r(x)=f(x)$ a.e.\ in $[0,1]$ (Theorem \ref{descr}). Since $G'_\textup{ap}(x)=f(x)$ a.e.\ in $[0,1]$, due to monotonicity property of the class of approximately continuous $[ACG]$-functions, $F-G$ is constant, so that $F$, like $G$, is a.e.\ $L^r$-differentiable, giving a contradiction.

\end{document}